\newcommand{\N}{{\mathbb{Z}} _{> 0} }
\newcommand{\Zp} {\Z _ {\ge 0} }
\newcommand{\Z}{\mathbb{Z}}
\newcommand{\C}{\mathbb{C}}
\newcommand{\gw}{\mathcal GW_3}
\newcommand{\pp}{\mathcal P}
\newcommand{\h}{\mathbf{h}}
\renewcommand{\c}{\mathbf{c}}
\newcommand{\iu}{\mathrm{i}\mkern1mu}
\newcommand{\normord}[1]{:\mathrel{\mkern2mu #1 \mkern2mu}:}
\def\vpr{v_{p,r}}
\def\<{\langle}
\def\>{\rangle}
\def\l{\lambda}
\renewcommand{\Re}{\operatorname{Re}}
\renewcommand{\Im}{\operatorname{Im}}
\def \ch{\mbox{char}_q}
\newtheorem{theorem}{Theorem}[section]
\newtheorem*{theorem*}{Theorem}
\newtheorem{corollary}[theorem]{Corollary}
\newtheorem{lemma}[theorem]{Lemma}
\newtheorem{remark}[theorem]{Remark}
\newtheorem{example}[theorem]{Example}
\newtheorem{definition}[theorem]{Definition}
\newtheorem{proposition}[theorem]{Proposition}
\def \a{\alpha }
\def \b {\beta}
\def \d {\delta}
\def \l{\lambda }
\def \L{\Lambda }
\def \lm{\overline\l}
\def \m{\mu}
\def \o{\omega}
\newcommand{\bea}{\begin{eqnarray}}
\newcommand{\eea}{\end{eqnarray}}
\newcommand{\bean}{\begin{eqnarray*}}
\newcommand{\eean}{\end{eqnarray*}}
\begin{document}

\title{Galilean $W_3$ algebra}
\author[]{Gordan Radobolja}
\curraddr{Faculty of Science, University of Split, Ru\dj era Bo\v{s}kovi\'{c}a 33,
21 000 Split, Croatia }
\email{gordan@pmfst.hr}
 
\keywords{Galilean algebras, W algebras}
\subjclass[2010]{Primary 17B69; Secondary 17B68, 81R10}
\date{\today}
\begin{abstract}
Galilean $W_3$ vertex operator algebra $\gw(c_L,c_M)$ is constructed as a universal enveloping vertex algebra of certain non-linear Lie conformal algebra. It is proved that this algebra is simple by using determinant formula of the vacuum module. Reducibility criterion for Verma modules is given, and the existence of subsingular vectors demonstrated. Free f{}ield realisation of $\gw(c_L,c_M)$ and its highest weight modules is obtained within a rank 4 lattice VOA.
\end{abstract}
\maketitle

\section{Introduction}
Galilean $W$--algebras have been studied extensively by physicists in the past decade (see for example \cite{A}, \cite{AB}, \cite{B}, \cite{BJMN}, \cite{HR}, \cite{RR}, \cite{S} and references therein). Given an inf{}inite-dimensional $W$-algebra with generators $W_1,\ldots,W_k$ of conformal weights $w_1,\ldots,w_k$ the associated Galilean algebra is generated by $W_1',\ldots,W_k',\overline{W}_1,\ldots,\overline{W}_k$ of conformal weights $w_1,\ldots,w_k,w_1,\ldots,w_k$, such that $\<\overline{W}_1,\ldots,\overline{W}_k\>$ is a commutative subalgebra on which all $W_i'$ act. Moreover, the relations between $W_i'$ and $W_j'$ as well as relations between $W_i'$ and $\overline{W}_j$ resemble the original relations between $W_i$ and $W_j$. This new algebra is obtained through a process called \textit{Galilean contraction}. Roughly speaking, one considers a tensor product of two copies of the original algebra (with arbitrary central charges) and takes a \textit{non-relativistic limit} (cf.\ \cite{RR}).

The most basic example is a Galilean conformal algebra (GCA), also known as BMS$_3$-algebra (Bondi-Metzner-Sachs) which comes from contracting the Virasoro algebra. See for example \cite{BH}.
In mathematical literature GCA f{}irst appeared in \cite{DZ} where it was called the $W(2,2)$ (Lie and vertex operator) algebra. Here "(2,2)" denotes conformal weights of two generators. This algebra is constructed by adjoining to the Virasoro algebra its "commutative double", i.e.\ it is a direct sum of (either Lie or vertex operator algebra) $\operatorname{Vir}$ and its adjoint representation. This is analogous to construction of Takif{}f algebras in f{}inite-dimensional case: $\operatorname{Vir}\otimes_\C(\C[x]/(x^2))$ with brackets $[a\otimes x^i,b\otimes x^j]=[a,b]\otimes x^{i+j}$, $a,b\in\operatorname{Vir}$. Free f{}ield realisation of GCA was obtained by means of $\beta\gamma$ system in \cite{BJMN}, albeit only for central charge $c_L=26$. Bosonic free f{}ield realisation for arbitrary non-zero central charge was later obtained in \cite{AR1}-\cite{AR3} and its representation theory has been developed in many papers. We recall the most important results in Subsection \ref{ff1}.

Galilean $W_3$ or BMS$_3$-$W_3$ algebra was originally introduced in \cite{AB}. 
In physics sense, the construction of this algebra follows the same prescription as GCA (cf.\ \cite{RR}) -- contraction of a tensor product of two copies of Zamolodchikov's $W_3$ algebra. Free f{}ield realisation for central charge $c_L=100$ was obtained by double $\beta\gamma$ system in \cite{BJMN}. Mathematically however, things are more complicated. First of all, $W_3$ is not a Lie algebra (quadratic terms appear when commuting the operators). Furthermore, from the OPE relations immediately follows that the Galilean $W_3$ is not an extension of $W_3$. Still, there is a "commutative double" which makes handling this algebra somewhat easier.

The aim of this paper is to give a mathematically rigorous def{}inition of Galilean $W_3$ algebra and to initiate the study of highest weight representations. We give this def{}inition by using Kac - De Sole language of non-linear conformal Lie algebras (NLCA) in Section \ref{defs}. The universal enveloping vertex algebra of presented NLCA is precisely the Galilean $W_3$ VOA from \cite{AB} (up to normalisation). By choosing a suitably ordered basis of the Verma module we utilise the commutative part and reduce the problem of f{}inding zeroes of determinant formula to a very simple matrix (\ref{kriterijirr}) of rank 2. This enables classif{}ication of irreducible Verma modules. As in the case of GCA, reducibility depends only on highest weights and central charge corresponding to the action of commuting generators (cf.\ Theorem \ref{reduc}). By calculating singular and subsingular vectors of conformal weights 1 we give a basis of the vacuum module. Considering its determinant formula we also prove that (universal) Galilean $W_3$ is a simple algebra (Theorem \ref{prosta}). The method used in this section should be easily applied to other Galilean algebras, and we expect that analogous results hold in general. Much like in the case of Virasoro and GCA, the structure and representation theory of Galilean $W_3$ algebra seems to be rather dif{}ferent than that of $W_3$ (cf.\ \cite{BMP}, \cite{BW}, \cite{M}, \cite{W}). Notably, there are no minimal models, and the structure of the Verma modules seems to be uniform for all central charges.

It is well known that Quantum Drinfeld-Sokolov reduction of an (universal) af{}f{}ine VOA $V^k(\mathfrak{sl}_N)$ produces the $W$-algebra $W_N$. The resulting algebra can, in turn, be realised as a subalgebra of $M(1)_{N-1}$, the Heisenberg algebra of rank $N-1$, and $M(1)_{N-1}$ is constructed over a lattice $L_{N-1}$ with Gram matrix equal to the Cartan matrix of $\mathfrak{sl}_N$ (cf.\ \cite{RSW}). Since Galilean $W_N$ algebra is obtained from a tensor product of two copies of $W_N$, it is natural to consider its free f{}ield realisation within a product of two copies of Heisenberg algebras used in realisation of $W_N$. 
In Section \ref{ff} we start with a rank 4 lattice which is a product of two lattices whose Gram matrices are equal to Cartan matrices of $\mathfrak{sl}_3$. In the associated rank 4 Heisenberg algebra we detect a family of subalgebras isomorphic to Galilean $W_3$ algebras with arbitrary non-zero central charges. Furthermore by using the associated lattice VOA, we present a realisation of highest weight modules in Section \ref{ffr}. The highest weights are parametrised in such a way that reducibility of Verma modules corresponds to positive integral values of the f{}irst parameter (Proposition \ref{param}). This resembles the GCA case which is recalled in Subsection \ref{ff1}. We expect that the positive integral values of other parameters detect subsingular vectors in general. This is verif{}ied on (sub)singular vectors at level one (Example \ref{wt1}).

Throughout the paper we work with central charge $(c_L,c_M)\in\C^2$ such that $c_M=0$. In Appendix \ref{cm=0} we present the def{}inition of $\gw(c_L,0)$ which is an extension of Virasoro VOA by an ideal generated by the remaining three f{}ields.

\vspace{15pt}
The author is partially supported by the QuantiXLie Centre of Excellence, a project cof{}f{}inanced by the Croatian Government and European Union through the European Regional Development Fund - the Competitiveness and Cohesion Operational Programme (KK.01.\allowbreak1.\allowbreak1.\allowbreak01.\allowbreak0004). 

\vspace{10pt}
The author would like to thank Dra\v zen Adamovi\'c for useful comments and discussions and Simon Wood for bringing the OPE package for \textsc{Mathematica} to my attention.

\section{Def{}initions}\label{defs}
We start by recalling the notion of non-linear Lie conformal algebra introduced in \cite{DSK}.
\begin{definition}[\cite{DSK}]
A Lie conformal algebra is a $\C[D]$-module $R$ with a $\C$-linear map $[\ _\l\ ]:R\otimes R\to R[\l]$ satisfying the following axioms
\bea
&&[Da_\l b]=-\l[a_\l b]\qquad[a_\l Db]=(\l+D)[a_\l b],\label{sesq}\\
&&[a_\l b]=-[b_{-\l-D}a],\label{skew}\\
&&[a_\l[b_\mu c]]-[b_\mu[a_\l c]]-[[a_\l b]_{\l+\mu}c]=0.\label{jac}
\eea
\end{definition}
To any Lie conformal algebra $R$ one canonically associates $V(R)$, the universal enveloping vertex algebra of $R$ which is freely generated by $R$. For $a,b,c\in V(R)$ we have
\begin{eqnarray}
&&[a_\l b]=\operatorname{Res}_z e^{z\l} Y(a,z)b=\sum_{n\in\Zp}\frac{\l^n}{n!}a_{(n)}b\in\C[\l],\label{ope}\\
&&\normord{ab}-\normord{ba}=\int_{-D}^0[a_\l b]\operatorname{d\l},\\
&&\normord{(\normord{ab})c}-\normord{a(\normord{bc})}=\normord{a\int_0^D[b_\l c]\operatorname{d\l}}+\normord{b\int_0^D[a_\l c]\operatorname{d\l}}\label{qas}\\
&&[a_\l\normord{bc}]=\normord{[a_\l b]c}+\normord{b[a_\l c]}+\int_0^\l[[a_\l b]_\mu c]\operatorname{d\mu},\label{lwick}\\
&&[\normord{ab}_\l c]=\normord{(e^{D\partial_\l}a)[b_\l c]}+\normord{(e^{D\partial_\l}b)[a_\l c]}+\int_0^\l[b_\mu[a_{\l-\mu}c]]\operatorname{d\mu}.\label{rwick}
\end{eqnarray}
where $a_{(n)}b=\operatorname{Res}_z Y(a,z)b$ denotes the $n$-th product and $\normord{ab}=a_{(-1)}b$ is a normally ordered product of f{}ields $Y(a,z)$ and $Y(b,z)$. $\int$ is a formal def{}inite integral operator on $R[\l]$, i.e.\ $$\int_{a}^{b}\l^n\operatorname{d\l}=\frac{1}{n+1}(b^{n+1}-a^{n+1}),\qquad n\in\Zp.$$
Note that (\ref{ope}) encodes the commutator formula $[Y[a,z],Y(b,w)]$ also known as operator product expansion (OPE) $$a(z)b(w)\sim\sum_{n\in\N}\frac{(a_{(n)}b)(w)}{(z-w)^n}.$$
Inf{}inite-dimensional Lie algebras like Virasoro, Heisenberg and af{}f{}ine Kac-Moody algebras give rise to Lie conformal algebras whose universal enveloping algebras are precisely the universal vertex algebras associated to starting Lie algebras.
\begin{example}
If $R=\C[D]L\oplus\C$ (with $D1=0$) such that $[L_\l L]=(D+2\l)L+\frac{c}{12}\l^3$ then $V(R)=\operatorname{Vir}_c$.\\
Let $R_G=\C[D]L\oplus\C[D]M\oplus\C$ such that 
\bean
\ [L_\l L]&=&(D+2\l)L+\frac{c_L}{12}\l^3,\\
\ [L_\l M]&=&(D+2\l)M+\frac{c_M}{12}\l^3,\\
\ [M_\l M]&=&0.
\eean
Then $V(R_G)=L^{W(2,2)}(c_L,c_M)$ is GCA with central charge $(c_L,c_M)$.
\end{example}
However, many freely generated vertex algebras are not universal envelopes of f{}inite Lie conformal algebras because the $n$-th products of some of their generators are nonlinear, i.e.\ they contain normally ordered products. For this reason one needs to extend the $\l$-bracket to $\mathcal{T}(R)$, the tensor algebra of $R$.

For $a\in R$ and $B\in\mathcal{T}(R)$ def{}ine $\normord{aB}=a\otimes B$ so that $D(1)=0$, $D(\normord{AB})=\normord{(DA)B}+\normord{AD(B)}$ for $A,B\in\mathcal{T}(R)$ and then extend the $\l$-bracket to $[\ _\l\ ]:R\otimes R\to\C[\l]\otimes\mathcal{T}(R)$ using (\ref{qas}-\ref{rwick}). In order to deal with the Jacobi identity (\ref{jac}) we assume that $R$ is $\Z$-graded by conformal weights $R=\bigoplus_{\Delta\in\Zp}R[\Delta]$ such that
\bea
\Delta(Da)=\Delta(a)+1,\qquad\Delta(a_{(n)}b)=\Delta(a)+\Delta(b)-n-1\label{cw}
\eea
for $n\in\Zp$. Extending the grading to $\mathcal{T}(R)$ def{}ine the subspace $\mathcal{M}_\Delta(R)\subset \mathcal{T}(R)_{\leq\Delta}$ spanned by all elements
\bea
X\otimes(b\otimes c-c\otimes b)\otimes Y-X\otimes\left(\normord{\left(\int_{-D}^0[b_\l c]\operatorname{d\l}\right)Y}\right)
\eea
where $b,c\in R$, $X,Y\in\mathcal{T}(R)$ and $\Delta(X\otimes b\otimes c\otimes Y)\leq\Delta$.
\begin{definition}[\cite{DSK}]
A non-linear Lie conformal algebra (NLCA) is a $\Z$-graded $\C[D]$-module $R=\bigoplus_{\Delta\in\Zp}R[\Delta]$ with a $\C$-linear map $[\ _\l\ ]:R\otimes R\to\C[\l]\otimes\mathcal{T}(R)$ satisfying (\ref{sesq}-\ref{skew}), (\ref{cw}) and
\bea
&&\Delta([a_\l b])<\Delta(a)+\Delta(b),\\
&&[a_\l[b_\mu c]]-[b_\mu[a_\l c]]-[[a_\l b]_{\l+\mu}c]\in\C[\l,\mu]\otimes\mathcal{M}_{\Delta'}(R),
\eea
for all $a,b,c\in R$, where $\Delta'<\Delta(a)+\Delta(b)+\Delta(c)$.
\end{definition}
To each NLCA $R$ one associates a universal enveloping vertex algebra $V(R)=\mathcal T(R)/\mathcal M(R)$ which is freely generated by $R$. Conversely, if $V$ is a vertex algebra freely generated by a $\C[D]$-submodule $R\subset V$, then there is a NLCA structure on $R$ and $V\cong V(R)$. See \cite{DSK} for details.

\vspace{15pt}
The f{}irst example of NLCA comes from the well known Zamolodchikov $W_3$ algebra. VOA $W_3(c)$ is generated by a conformal f{}ield $\o(z)=\sum L(n)z^{-n-2}$ and a primary f{}ield $W(z)=\sum W(n)z^{-n-3}$ satisfying the following OPE:
\bea
\ \o(z)\o(w)&\sim&\frac{c/2}{(z-w)^4}+\frac{2\o(w)}{(z-w)^2}+\frac{\partial \o(w)}{z-w}\\
\ \o(z)W(w)&\sim&\frac{3W(w)}{(z-w)^2}+\frac{\partial W(w)}{z-w}\\
\ W(z)W(w)&\sim&\frac{c/3}{(z-w)^6}+\frac{2\o(w)}{(z-w)^4}+\frac{\partial\o(w)}{(z-w)^3}+\\
&+&\frac{1}{(z-w)^2}\left(\frac{3}{10}\partial^2\o(w)+2\b\L(w)\right)+\frac{1}{z-w}\left(\frac{1}{15}\partial^3\o(w)+\b\partial\L(w)\right)
\eea
where $\L(z)=\normord{\o(z)^2}-\frac{3}{10}\partial^2\o(z)$ and $\b=\frac{16}{22+5c}$.

Let $R=\C[D]L\oplus\C[D]W\oplus\C$ be a NLCA with the following $\l$-brackets:
\bea
\left[L_\l L\right]&=&(D+2\l)L+\frac{c}{12}\l^3,\\
\left[L_\l W\right]&=&(D+3\l)W,\\
\left[W_\l W\right]&=&\frac{c}{360}\l^5+\left(\frac{\l^3}{3}+\frac{\l^2}{2}D+\frac{3\l}{10}D^2+\frac{1}{15}D^3\right)L+\\
&&+\frac{16}{5c+22}(D+2\l)\left(L^2-\frac{3}{10}D^2L\right).\nonumber
\eea
Then $V(R)$ is precisely $W_3(c)$ (cf.\ \cite{DSK}).\\

Now we def{}ine the Galilean $W_3$ algebra.
\begin{definition}\label{gw3}
Let $c_L,c_M\in\C$, $c_M\ne0$. The Galilean $W_3$ NLCA is def{}ined as $$\gw(c_L,c_M)=\C[D]L\oplus\C[D]W\oplus\C[D]M\oplus\C[D]V\oplus\C,$$
where $\Delta(L)=\Delta(M)=2$, $\Delta(W)=\Delta(V)=3$ and with the following non-trivial $\l$-brackets
\begin{eqnarray}\label{lambda-b}
\left[L_\l L\right]&=&(D+2\l)L+\frac{c_L}{12}\l^3,\\
\left[L_\l M\right]&=&(D+2\l)M+\frac{c_M}{12}\l^3,\\
\left[L_\l W\right]&=&(D+3\l)W,\\
\left[L_\l V\right]&=&(D+3\l)V,\\
\left[M_\l W\right]&=&(D+3\l)V,\\
\left[W_\l W\right]&=&\frac{c_L}{360}\l^5+\left(\frac{\l^3}{3}+\frac{\l^2}{2}D+\frac{3\l}{10}D^2+\frac{1}{15}D^3\right)L+\\
&+&\frac{32}{5c_M}(D+2\l)\left(LM-\frac{3}{10}D^2M\right) - \frac{16}{5c_M^2}\left(c_L+\frac{44}{5}\right)(D+2\l)M^2,\nonumber\\
\left[W_\l V\right]&=&\frac{c_M}{360}\l^5+\left(\frac{\l^3}{3}+\frac{\l^2}{2}D+\frac{3\l}{10}D^2+\frac{1}{15}D^3\right)M+\frac{16}{5c_M}(D+2\l)M^2.
\end{eqnarray}
\end{definition}
Proving that the axioms of NLCA (in particular the Jacobi identity) hold is a straightforward, but rather tedious task (see Appendix \ref{app}). Another way of showing that this def{}inition is consistent is by obtaining a free f{}ield realisation. This is presented in Section \ref{ff}.

For simplicity, we shall use the same notation $\gw(c_L,c_M)$ for the associated universal enveloping vertex algebra which is generated by f{}ields
\bea
\o(z) &=& \sum_{n\in\Z} L(n)z^{-n-2},\\
W(z) &=& \sum_{n\in\Z} W(n)z^{-n-3},\\
M(z) &=& \sum_{n\in\Z} M(n)z^{-n-2},\\
V(z) &=& \sum_{n\in\Z} V(n)z^{-n-2}.
\eea
Also def{}ine the following f{}ields
\bea
\L(z)&=&\normord{L(z)M(z)}-\tfrac{3}{10}\partial^2 M(z)=\sum_{n\in\Z} \L(n)z^{-n-4},\\
\Theta(z)&=&\normord{M(z)M(z)}=\sum_{n\in\Z} \Theta(n)z^{-n-4},
\eea
so
\bea
\L(k)&=&\sum_{i\in\Z}\normord{L(-i)M(k+i)}-\frac{3}{10}(k+2)(k+3)M(k),\\
\Theta(k)&=&\sum_{i\in\Z}\normord{M(i)M(k-i)}.
\eea
Then the components of these f{}ields satisfy the following non-trivial commutation relations:
\begin{eqnarray}
\ [L(n),L(m)]&=&(n-m)L(n+m)+\d_{n+m,0}\frac{n(n^2-1)}{12}c_L\label{b1}\\
\ [L(n),W(m)]&=&(2n-m)W(n+m)\\
\ [L(n),M(m)]&=&(n-m)M(n+m)+\d_{n+m,0}\frac{n(n^2-1)}{12}c_M\label{b3}\\
\ [L(n),V(m)]&=&(2n-m)V(n+m)\label{b4}\\
\ [M(n),W(m)]&=&(2n-m)V(n+m)\label{b5}\\
\ [W(n),W(m)]&=&\frac{n-m}{30}\left((2n^2+2m^2-nm-8)L(n+m)+\frac{192}{c_M}\L(n+m)+\right.\label{ww}\\
	&&\left. -\frac{96}{c_M^2}(c_L+\frac{44}{5})\Theta(n+m)\right)+\d_{n+m,0}\frac{n(n^2-1)(n^2-4)}{360}c_L\nonumber\\
\ [W(n),V(m)]&=&\frac{n-m}{30}\left((2n^2+2m^2-nm-8)M(n+m)+\frac{96}{c_M}\Theta(n+m)\right)+\label{b2}\label{b7}\\
	&&+\d_{n+m,0}\frac{n(n^2-1)(n^2-4)}{360}c_M.\nonumber
\end{eqnarray}
This agrees with algebra introduced in \cite{AB} up to a normalisation factor $1/30$ (as in \cite{BJMN}).
Notice that $\o(z)$ and $M(z)$ generate a subalgebra isomorphic to the Galilean conformal algebra with central charge $(c_L,c_M)$. However, $\o(z)$ and $W(z)$ do not generate a copy of $W_3$ due to (\ref{ww}). A natural question arises: can we def{}ine a Galilean algebra in such a way that both Virasoro, and $W_3$ are its subalgebras acting on a commutative part? It turns out that this is not possible. Due to non-linearity, the Jacobi identity for such $\l$-brackets would not hold (see Appendix \ref{app2}).

\begin{corollary}
We have
\bea
\ch\gw(c_L,c_M)=(1-q^2)^{2}\prod_{n\geq3}(1-q^n)^{-4}.
\eea
\end{corollary}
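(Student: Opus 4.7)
The plan is to combine the simplicity of $\gw(c_L,c_M)$ established in Theorem \ref{prosta} with the PBW-type theorem for universal enveloping vertex algebras of NLCAs from \cite{DSK}. Since $\gw(c_L,c_M)$ is simple, it coincides with the full universal enveloping vertex algebra $V(R)$ of the NLCA $R$ of Definition \ref{gw3}, so no relations among the generators are imposed beyond those encoded by the $\lambda$-brackets themselves.

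By the NLCA PBW theorem, fixing any total order on the generating $\C[D]$-module $R$ (for example $L<M<W<V$, ordered further by powers of $D$) produces a vector-space basis of $V(R)$ indexed by ordered normally ordered monomials in the generators. Translated to the vacuum module, a basis of $\gw(c_L,c_M)$ consists of the vectors
\begin{equation*}
L(-n_1)\cdots L(-n_a)\,M(-m_1)\cdots M(-m_b)\,W(-p_1)\cdots W(-p_c)\,V(-q_1)\cdots V(-q_d)\,\mathbf{1},
\end{equation*}
with weakly decreasing indices and lower bounds $n_i,m_j\geq 2$ (from $\Delta(L)=\Delta(M)=2$) and $p_i,q_j\geq 3$ (from $\Delta(W)=\Delta(V)=3$), reflecting the vacuum constraint $a_{(n)}\mathbf{1}=0$ for $n\geq 0$.

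The character is then a straightforward count: each weight-two generator contributes $\prod_{n\geq 2}(1-q^n)^{-1}$ and each weight-three generator contributes $\prod_{n\geq 3}(1-q^n)^{-1}$ to $\ch\gw(c_L,c_M)$. Taking the product of the four factors and collecting common terms produces the claimed formula. The only non-routine ingredient is the invocation of the NLCA PBW theorem, whose hypotheses require the NLCA axioms — in particular the (nonlinear) Jacobi identity — to hold for $R$; the verification of these axioms is the main technical obstacle and is carried out in Appendix \ref{app}.
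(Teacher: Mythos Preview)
Your computation via the PBW basis is the same as the paper's, and the character count is correct. However, your invocation of Theorem \ref{prosta} (simplicity) is both unnecessary and based on a misreading of the definitions: in this paper $\gw(c_L,c_M)$ is \emph{defined} to be the universal enveloping vertex algebra $V(R)$ (see the sentence immediately after Definition \ref{gw3}), so there is nothing to prove about ``coinciding with $V(R)$''; the PBW theorem of \cite{DSK} applies directly. Simplicity says $V(R)$ has no nontrivial ideals, not that some a priori smaller object equals $V(R)$. Note also that the Corollary appears before Theorem \ref{prosta} in the paper, so your forward reference would be out of order; the paper's own proof avoids this entirely by appealing only to the PBW basis, exactly as you do in your second and third paragraphs. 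Drop the first paragraph and your argument matches the paper's.
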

\begin{proof}
We f{}ix an ordering $V>M>W>L$, and obtain a PBW basis of the universal enveloping vertex algebra $\gw(c_L,c_M)$ (cf.\ \cite{DSK}) which consists of monomials
\bea
V(-p_v)\cdots V(-p_1)M(-r_m)\cdots M(-r_1)W(-s_m)\cdots W(-s_1)L(-t_l)\cdots L(-t_1)\mathbf{1}\label{PBW-VOA}
\eea
such that $p_{k+1}\geq p_k\geq 3$, $r_{k+1}\geq r_k\geq 2$, $s_{k+1}\geq s_k\geq 3$, $t_{k+1}\geq t_k\geq 2$. Then the $q$-character formula is
\bea
\ch L(\c,0)=(1-q^2)^2(1-q)^4\prod_{n\geq1}\frac{1}{(1-q^n)^4}
\eea
which proves the assertion.
\end{proof}

\section{Highest weight modules}\label{mods}
Let $M$ be an ordinary module over the VOA $V$. In particular $M=\bigoplus_{h\in\C}M_h$, where $M_h=\lbrace m\in M:L(0)v=hv\rbrace$ is a subspace of conformal weight $h$, $\dim M_h<\infty$ and for each $u\in V$ and $v\in M$ we have $u(n)v=0$ for $n\gg0$.\\
A homogeneous vector $v\in M_h$ is called
\begin{itemize}
\item \textbf{singular} if for each $u\in V$ and $n\in\Zp$ we have $u(n)v=\delta_{n,0}h_u v$ for $h_u\in\C$;
\item \textbf{pseudo-singular} if for each $u\in V$ and $n\in\N$ we have $u(n)v=0$;
\item \textbf{subsingular} if there exists a submodule $N\subset M$ such that $v+N$ is singular in a quotient module $M/N$.
\end{itemize}
If $M$ is generated by a singular vector $v$ we say that $M$ is a \textbf{highest weight module}, and call $v$ a highest weight vector. Assume that $\gw(c_L,c_M)$-module is generated by a pseudo-singular vector, so $M=\bigoplus_{n\in\Zp}M_{h+n}$ for some $h\in\C$. Since $L(0)$, $M(0)$, $W(0)$ and $V(0)$ are mutually commuting operators acting on (a f{}inite-dimensional complex space) $M_h$, there exists a common eigenvector i.e.\ a highest weight vector. We restrict our study to highest weight modules, i.e.\ the case when $M_h$ is one-dimensional.

Let $\h:=(h_L,h_W,h_M,h_V)\in\C^4$ be arbitrary scalars and $\c:=(c_L,c_M)$. Verma module denoted by $V(\c,\h)$ is the universal highest weight module $\gw(c_L,c_M).v_\h$ of highest weight $\h$. The action of $\gw(c_L,c_M)$ on $v_\h$ is determined by relations
\bea
&&L(n)v_\h=\delta_{n,0}h_Lv_\h,\quad W(n)v_\h=\delta_{n,0}h_Wv_\h,\\ &&M(n)v_\h=\delta_{n,0}h_Mv_\h,\quad V(n)v_\h=\delta_{n,0}h_Vv_\h,\quad n\in\Zp,
\eea
and (\ref{b1})-(\ref{b7}).
\begin{remark}
Verma modules in classical case are def{}ined either as a quotient of universal enveloping algebra of a given Lie algebra $\mathfrak g$, or equivalently, as a module induced from Borel subalgebra, i.e.\ using a triangular decomposition $\mathfrak g=\mathfrak g_-\oplus\mathfrak g_0\oplus\mathfrak g_+$. Since $\gw(c_L,c_M)$ is not a (linear) Lie algebra and does not have a natural triangular decomposition (to subalgebras) we do not have these tools available. However, highest weight theory still works for general VOA. One way of proving existence of universal highest weight modules is by applying Zhu's theory. We sketch the idea without going into details.

It is well known in vertex algebra theory that for each VOA $V$ there exists an associative algebra $A(V)$ called Zhu's algebra of $V$ which controls the representation theory of $V$ in the following sense. For a $V$-module $M=\bigoplus_{n\in\Zp}M_{h+n}$, $M_h$ is an $A(V)$-module. Conversely, every $A(V)$-module is a top level of some $V$-module. Obviously, one-dimensional $A(V)$-modules correspond to highest weight $V$-modules.

It is not dif{}f{}icult to show that $A(\gw(c_L,c_M))$ is a commutative algebra with 4 generators. We will show in Section \ref{ff} that $\gw(c_L,c_M)$ is a subalgebra of a rank 4 Heisenberg algebra $M(1)$. Highest weight $M(1)$-modules then provide a realisation of highest weight $\gw(c_L,c_M)$-modules. The top levels of these modules are precisely the one-dimensional $A(\gw(c_L,c_M))$-modules and their existence then yields the existence of the Verma modules over $\gw(c_L,c_M)$ as universal highest weight modules.
\end{remark}
Since $\gw(c_L,c_M)$ is a freely generated VOA with a natural PBW basis
$$\lbrace\normord{V(z)^{n_v}M(z)^{n_m}W(z)^{n_w}L(z)^{n_l}}\vert n_v,m_n,n_w,n_l\in\Zp\rbrace$$
(cf.\ \cite{DSK}) by universality of the Verma module we see that the set of monomials
\begin{eqnarray}\label{pbw1}
V(-i_v)\cdots V(-i_1)M(-j_m)\cdots M(-j_1)W(-k_w)\cdots W(-k_1)L(-n_l)\cdots L(-n_1)v_\h
\end{eqnarray}
such that $v,m,w,l\in\Zp$, $i_v\geq\cdots\geq i_1\geq1$, $j_m\geq\cdots\geq j_1\geq1$, $k_w\geq\cdots\geq k_1\geq1$ and $n_l\geq\cdots\geq n_1\geq1$ forms a basis of $V(\c,\h)$.
We have $$V(\c,\h)=\bigoplus_{n\in\Zp}V(\c,\h)_n,\qquad V(\c,\h)_n=\lbrace v\in V(\c,\h):L(0)v=(h_L+n)v\rbrace.$$
Let $P(n)$ denote the partition function on $\Zp$. Then 
\bea
\dim V(\c,\h)_n = \sum_{i,j,k\geq 0} P(i)P(j)P(k)P(n-i-j-k)
\eea
so
\bea
\ch V(\c,\h)=q^{h_L}\prod_{n\geq1}\frac{1}{(1-q^n)^4}.
\eea

Let $V$ be a VOA, $M=\oplus_{n\in\Zp}M_{h+n}$ an ordinary weight $V$-module, and $M^*=\oplus_{n\in\Zp}M_{h+n}^*$ its restricted dual. Let $\<\cdot,\cdot\>:M^*\times M\to\C$ denote the natural pairing and $Y_{M^*}:V\to\operatorname{End}M^*[[z,z^{-1}]]$ be a linear map such that
\[
\<Y_{M^*}(v,z)w',w\>=\<w',Y_M(e^{zL(1)}(-z^{-1})^{L(0)}v,z^{-1})w\>
\]
for $v\in V$, $w\in M$, $w'\in M^*$. Then $(M^*,Y_{M^*})$ is a $V$-module, called the contragredient of $M$ (cf.\ \cite{FHL}). In case of $V=\gw(c_L,c_M)$ we have
\bea
L(n)^*&=&L(-n),\\
W(n)^*&=&-W(-n),\\
M(n)^*&=&M(-n),\\
V(n)^*&=&-V(-n),\\
\Lambda(n)^*&=&\Lambda(-n),\\
\Theta(n)^*&=&\Theta(-n).
\eea
\begin{lemma}
Let $L(\c,\h)$ denote the irreducible quotient of $V(\c,\h)$. Then 
\[
L(\c,\h)^*=L(\c,\h^*)
\]
where $\h^*=(h_L,-h_W,h_M,-h_V)$.
\end{lemma}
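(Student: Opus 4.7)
The plan is to realize $L(\c,\h)^*$ as a highest weight module of highest weight $\h^*$ and conclude via simplicity. First, I would invoke the standard fact that for a simple, $L(0)$-diagonalizable module $M$ over a VOA with finite-dimensional graded components and grading bounded below, the contragredient $M^*$ is again simple: a nonzero submodule $N\subset M^*$ is automatically graded, its annihilator $N^\perp\subset M$ is a $\gw(c_L,c_M)$-submodule, and non-degeneracy of the induced pairing on each weight space forces $N^\perp\in\{0,M\}$, hence $N\in\{M^*,0\}$. Applied to $M=L(\c,\h)$, this yields simplicity of $L(\c,\h)^*$, together with the grading $L(\c,\h)^*=\bigoplus_{n\geq 0}L(\c,\h)_{h_L+n}^*$.

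Next I would identify the candidate highest weight vector. Since $L(\c,\h)_{h_L}$ is one-dimensional and spanned by (the image of) $v_\h$, the top space $L(\c,\h)^*_{h_L}$ is spanned by the unique functional $v^*$ with $\langle v^*,v_\h\rangle=1$. The grading $\{h_L+n:n\geq 0\}$ on $L(\c,\h)^*$, combined with the observation that $L(n),W(n),M(n),V(n)$ strictly lower conformal weight when $n>0$, immediately forces these modes to annihilate $v^*$. It then suffices to compute the zero-mode eigenvalues using the adjoint relations already recorded in the excerpt; pairing against $v_\h$ gives
\[
\langle L(0)v^*,v_\h\rangle=h_L,\quad\langle W(0)v^*,v_\h\rangle=-h_W,\quad\langle M(0)v^*,v_\h\rangle=h_M,\quad\langle V(0)v^*,v_\h\rangle=-h_V,
\]
so that $v^*$ is singular of weight $\h^*=(h_L,-h_W,h_M,-h_V)$.

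Finally I would conclude: $v^*$ is a nonzero singular vector of weight $\h^*$ inside the simple module $L(\c,\h)^*$, exhibiting $L(\c,\h)^*$ as a quotient of $V(\c,\h^*)$. Simplicity then pins down the isomorphism $L(\c,\h)^*\cong L(\c,\h^*)$.

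I do not anticipate a serious obstacle. The adjoint relations follow from the standard FHL formula $a(n)^*=(-1)^{\Delta(a)}a(-n)$ for quasi-primary fields, applied to $L,W,M,V$ of conformal weights $2,3,2,3$, and they are already tabulated in the excerpt. The only mildly delicate point is the simplicity step, where one must confirm that every submodule of the restricted dual is automatically $L(0)$-graded; this is immediate once one recalls that $L(0)$ acts semisimply with finite-dimensional eigenspaces on $L(\c,\h)^*$.
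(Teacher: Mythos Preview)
Your proof is correct, and it supplies exactly the standard argument the paper leaves implicit: the lemma is stated in the paper without proof, immediately after the list of adjoint relations $L(n)^*=L(-n)$, $W(n)^*=-W(-n)$, $M(n)^*=M(-n)$, $V(n)^*=-V(-n)$, and is evidently meant to be read as their direct consequence. Your steps---simplicity of the contragredient via the annihilator argument, identification of the top functional $v^*$ as a singular vector, and computation of its zero-mode eigenvalues from the sign pattern $(-1)^{\Delta}$---are precisely what one expects the reader to fill in.
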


Natural pairing with the contragredient module induces a symmetric non-degenerate invariant bilinear form on $V(\c,\h)$ such that
\[
\<v_\h\vert v_\h\>=1,\qquad\<x.v_\h\vert y.v_\h\>=\<v_\h\vert x^*y.v_\h\>.
\]
In order to classify irreducible Verma modules we need to consider the determinant formula associated to this form. Since $\< V(\c,\h)_n\vert V(\c,\h)_m\>=0$ for $n\ne m$ we focus on $\det\< V(\c,\h)_n\vert V(\c,\h)_n\>$. We are only interested in its zeros so we will not calculate exponents of all the dif{}ferent factors in this determinant. Instead we introduce an ordering on the chosen basis of $V(\c,\h)_n$ and decompose the matrix $\< V(\c,\h)_n\vert V(\c,\h)_n\>$ to a tensor product of block triangular matrices, thus reducing the problem to f{}inding determinant of much simpler matrices. In the following subsection we show that this problem ultimately reduces to calculation of determinant
\begin{eqnarray}\label{kriterijirr}
D_n:&=&\begin{vmatrix}
	\< L(-n) v_{\h}\vert M(-n)v_{\h} \> & \< L(-n)v_{\h} \vert V(-n)v_{\h} \>\\
	\< W(-n) v_{\h}\vert M(-n)v_{\h} \> & \< W(-n)v_{\h} \vert V(-n)v_{\h} \>
\end{vmatrix}\\
	&=&\left(\frac{64}{5c_M}\left(h_M+\frac{n^2-1}{24}c_M\right)^2\left(h_M+\frac{n^2-4}{96}c_M\right)-9h_V^2\right)n^2. 
\end{eqnarray}
Furthermore, we use the same method to describe the module $L(\c,0)$ and prove simplicity of $\gw(c_L,c_M)$ in Subsection \ref{simple}.

\subsection{Determinant formula and classif{}ication of irreducible Verma modules}\label{det-form}

Instead of standard PBW basis (\ref{pbw1}) we will work with the following basis of $V(\c,\h)_n$:
\begin{eqnarray}\label{pbw}
B_n&=&\left\lbrace V(-n)^{v_n} M(-n)^{m_n}\cdots V(-1)^{v_1} M(-1)^{m_1} W(-n)^{w_n} L(-n)^{l_n}\cdots\right.\\
&&\left.\ \cdots W(-1)^{w_1}L(-1)^{l_1} v_{\h} : \sum_{i=1}^{n}i(v_i+m_i+w_i+l_i) = n\right\rbrace\nonumber
\end{eqnarray}

Now we introduce the ``commutative degree'' of a basis monomial: for $x\in B_n$ let
\bea
\deg_c x&=&\sum_{i=1}^{n}i(v_i+m_i),\\
V(\c,\h)_n^k&=&\mbox{span}_\C\lbrace x\in B_n:\deg_c x=k\rbrace,\\
B_n^k&=&B_n\cap V(\c,\h)_n^k.
\eea
Then
\bea
&&V(\c,\h)_n=\bigoplus_{k=0}^{n}V(\c,\h)_n^k\\
&&\dim V(\c,\h)_n^k = P_2(k)P_2(n-k) = \dim V(\c,\h)_n^{n-k},
\eea
where $P_2(m)=\sum_{i=0}^{m}P(i)P(m-i)$.\\
For $x\in B_n$ we write $x=x^c x^{nc} v_\h$, where $\deg_c x^c=\deg_c x$, and $\deg_c x^{nc}=0$. In other words, $x^c$ is a product of factors $M(-i)$ and $V(-i)$, while $x^{nc}$ is a product of factors $L(-i)$ and $W(-i)$. Then
\begin{eqnarray}
	\<x\vert y\> &=& \<(y^c)^*x^{nc}v_\h\vert(x^c)^*y^{nc}v_\h\>\label{sp1}
\end{eqnarray}
so $\<V(\c,\h)_n^k\vert V(\c,\h)_n^l\>=0$ if $k+l>n$. We may order the elements of $B_n$ so that for $x,y\in B_n$ $x\prec y$ if $\deg_cx<\deg_cy$. Then the Gram matrix of $\< V(\c,\h)_n\vert V(\c,\h)_n\>$ is block triangular with (nontrivial) diagonal blocks $\< V(\c,\h)_n^k\vert V(\c,\h)_n^{n-k}\>$, $k=0,\ldots,n$.\\
Let $x\in V(\c,\h)_n^k$ and $y\in V(\c,\h)_n^{n-k}$. Then
\begin{eqnarray}
\<x\vert y\> &=& \<(y^c)^*x^{nc}v_\h\vert v_\h\>\cdot\< v_\h\vert(x^c)^*y^{nc}v_\h\>\nonumber\\
	&=&\<x^{nc}v_\h\vert y^c v_\h\>\cdot\<x^cv_\h\vert y^{nc}v_\h\>.\label{sp2}
\end{eqnarray}
This shows that the Gram matrix of $\< V(\c,\h)_n^k\vert V(\c,\h)_n^{n-k}\>$ is a tensor product of matrices of the type $\< V(\c,\h)_{n-k}^0\vert V(\c,\h)_{n-k}^{n-k}\>$ and $\< V(\c,\h)_k^k\vert V(\c,\h)_k^0\>$. Therefore the problem of calculating (the zeroes of) $\det\< V(\c,\h)_n\vert V(\c,\h)_n\>$ reduces to f{}inding determinant of matrix $A_0=\< V(\c,\h)_n^0\vert V(\c,\h)_n^n\>$ which represents the action of monomials in $W(i)$ and $L(j)$ on monomials in $V(-k)$ and $M(-l)$.

Let us introduce a suitable ordering on $B_n^n$ and $B_n^0$ which makes $A_0$ block-triangular. We exploit the following fact: 
\begin{eqnarray}
&&k>i_v,j_m\Rightarrow X(k)V(-i_v)\cdots V(-i_1)M(-j_m)\cdots M(-j_1)v_\h=0,\qquad X\in\lbrace L,W\rbrace.\label{key}
\end{eqnarray}
Since we don't need to distinguish $L$ from $W$ and $M$ from $V$ we consider the type of monomial based only on partition. For $\bar k=(k_1,\ldots,k_n),\ \bar l=(l_1,\ldots,l_n)\in(\Zp)^n$ we def{}ine:
\bea
&&\bar k+\bar l = (k_1+l_1,\ldots,k_n+l_n)\\
&&\pp_n=\{(\bar k,\bar l)\in(\Zp)^n\times(\Zp)^n:\sum_{i=1}^{n}i(k_i+l_i)=n\}
\eea
and say that $(\bar k,\bar l)$ is of type $t(\bar k,\bar l)=\bar k+\bar l$.\\
There is a natural one to one correspondence between $\pp_n$ and $B_n^n$
\[
[VM](\bar v,\bar m) := V(-n)^{v_n} M(-n)^{m_n}\cdots V(-1)^{v_1} M(-1)^{m_1}v_{\h},
\]
i.e.\ between $\pp_n$ and $B_n^0$
\[
[WL](\bar w,\bar l) := W(-n)^{w_n} L(-n)^{l_n}\cdots W(-1)^{w_1}L(-1)^{l_1} v_{\h}.
\]
Def{}ine order on $(\Zp)^n$ by
\bea
\bar k\prec \bar l\quad\text{if}\quad k_{n-i}=l_{n-i}\text{ for }i=0,\ldots,j-1\text{ and }k_{n-j}>l_{n-j}
\eea
and use it to def{}ine order on $\pp_n$ by type: for $(\bar k,\bar l),(\bar k',\bar l')\in\pp_n$
\bea
(\bar k,\bar l) \prec (\bar k',\bar l')\quad&\text{if}&\quad t(\bar k,\bar l)\prec t(\bar k',\bar l')\\
	&\text{or}&\quad t(\bar k,\bar l)=t(\bar k',\bar l')\text{, and }\bar k\prec\bar k'.
\eea
This induces partial orders on $B_n^n$ and $B_n^0$. It is clear from (\ref{key}) that 
\bea
t(\bar k,\bar l) \prec t(\bar k',\bar l') \Rightarrow \<[WL](\bar k,\bar l)\vert[VM](\bar k',\bar l')\>=0
\eea
so $A_0$ is block-triangular with diagonal blocks of the type
\bea
\<V(\c,\h)_n^0(\bar k+\bar l)\vert V(\c,\h)_n^n(\bar k+\bar l)\>
\eea
where
\bea
V(\c,\h)_n^n (\bar k,\bar l)&=& \mbox{span}_\C\{[VM](\bar k',\bar l')\in B_n^n:t(\bar k',\bar l')=t(\bar k,\bar l)\},\\
V(\c,\h)_n^0 (\bar k,\bar l)&=& \mbox{span}_\C\{[WL](\bar k',\bar l')\in B_n^0:t(\bar k',\bar l')=t(\bar k,\bar l)\}
\eea
denote spans of basis elements of the same type. However, by construction we see that
\bea
\<V(\c,\h)_n^0(k_1,\ldots,k_n)\vert V(\c,\h)_n^n(k_1,\ldots,k_n)\>
\eea
is a tensor product of matrices of type
\bea
\<V(\c,\h)_n^0(0,\ldots,0,k_p,0,\ldots,0)\vert V(\c,\h)_n^n(0,\ldots,0,k_p,0,\ldots,0)\>,\quad p=1,\ldots,n
\eea
which correspond to the action of monomials $L(p)^rW(p)^{k_p-r}$, $r=0,\ldots,k_p$ on monomials \\$V(-p)^{k_p-r}M(-p)^r$, $r=0,\ldots,k_i$. Denote
\bea
&&W(p)V(-p)v_{\h}=a v_{\h},\label{abd}\\
&&W(p)M(-p)v_{\h}=L(p)V(-p)v_{\h}=b v_{\h},\label{abd1}\\
&&L(p)M(-p)v_{\h}=d v_{\h}.\label{abd2}
\eea

We show by induction on $n$ that for $i,j\in\lbrace 1,\ldots,n+1\rbrace$ the element at intersection of $i^{\text{th}}$ row and $j^{\text{th}}$ column of this matrix equals
\bea\label{relacija}
&&\a_{i,j}^{(n)}=L(p)^{i-1}W(p)^{n+1-i}V(-p)^{n+1-j}M(-p)^{j-1}v_\h=\nonumber\\
&&=(n+1-j)!(j-1)!a^{n-i-j+2}b^{i+j-2}\sum_{k=0}^{i-1}a^kd^k b^{-2k}\binom{n+1-i}{j-k-1}\binom{i-1}{k}v_\h.
\eea
The basis for $n=1$ gives (\ref{abd}-\ref{abd2}). Denote the righthand side of (\ref{relacija}) by $I_j$. Then
\bea
\a_{i,j}^{(n+1)}&=&(n+2-j)a I_j+(j-1)bI_{j-1}\\
&=&(n+2-j)!(j-1)!\times\\
&&\times\sum_{k=0}^{i-1}a^{n-i-j+k+3}b^{i+j-2k-2}d^k\binom{i-1}{k}\left(\binom{n+1-i}{j-k-1}+\binom{n+1-i}{j-k-2}\right)v_\h\nonumber
\eea
which proves the claim.\\
By direct calculation one can show that for each $j$ we have
\bea
&&\sum_{\ell=0}^i \binom{i-1}{\ell}\a_{i-\ell,j}(-b^2)^{\ell} = (n+1-j)!(j-1)!\binom{n+1-i}{j-i}a^{n-i-j+2}b^{j-i}(ad-b^2)^{i-1}
\eea
By elementary transformations we obtain a triangular matrix with determinant equal to
\bea
(ad-b^2)^{\frac{n(n+1)}{2}}\prod_{j=0}^{n}(n-j)!j!
\eea
The brackets (\ref{b3})-(\ref{b7}) yield
\bea
a &=& \frac{p}{15}\left((5p^2-8)h_M+\frac{96}{c_M}h_M^2\right)+\frac{p(p^2-1)(p^2-4)}{360}c_M,\\
b &=& 3p h_V,\\
d &=& 2p h_M+\frac{p(p^2-1)}{12}c_M.
\eea
From these considerations follows:
\begin{theorem}\label{reduc}
The Verma module $V(\c,\h)$ is reducible if and only if 
\begin{eqnarray}\label{krit-red}
h_V^2 = \frac{64\left(h_M+\frac{p^2-1}{24}c_M\right)^2\left(h_M+\frac{p^2-4}{96}c_M\right)}{45 c_M}
\end{eqnarray}
for some $p\in\N$. In that case, there is a singular vector in $V(\c,\h)^p_p$, where $p\in\N$ is the lowest such that (\ref{krit-red}) holds.
\end{theorem}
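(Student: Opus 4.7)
The plan is to combine the block-triangular decomposition of the Shapovalov matrix (already set up in this subsection) with the explicit determinant of the fundamental blocks given by (\ref{relacija}), thereby reducing reducibility of $V(\c,\h)$ to the single scalar condition $ad - b^2 = 0$ for some $p \in \N$, where $a,b,d$ are the quantities (\ref{abd})--(\ref{abd2}).

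First I would verify the closed-form expression (\ref{relacija}) by induction on the matrix-size index $n$. The base case $n=1$ is (\ref{abd})--(\ref{abd2}), and the inductive step follows by moving one additional $L(p)$ or $W(p)$ past the outermost factor of $V(-p)$ or $M(-p)$ using (\ref{b3})--(\ref{b7}) together with $L(p)v_\h = W(p)v_\h = 0$. This produces the recursion $\alpha^{(n+1)}_{i,j} = (n+2-j)\,a\,\alpha^{(n)}_{i,j} + (j-1)\,b\,\alpha^{(n)}_{i,j-1}$ displayed in the excerpt. The stated row operation $\sum_{\ell} \binom{i-1}{\ell}(-b^2)^\ell \alpha^{(n)}_{i-\ell, j}$ then triangularises the matrix with diagonal proportional to $(ad-b^2)^{i-1}$, so each elementary block of size $n+1$ has determinant $(ad-b^2)^{n(n+1)/2}\prod_{j=0}^{n}(n-j)!\,j!$.

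Next, I would apply the commutation relations (\ref{b3})--(\ref{b7}) directly to $v_\h$, noting in particular $\Theta(0)v_\h = h_M^2\,v_\h$, to obtain the explicit values of $a, b, d$ displayed in the excerpt, and then verify by a routine algebraic manipulation that
\[
a d - b^2 \;=\; p^2\Bigl(\tfrac{64}{5 c_M}\bigl(h_M + \tfrac{p^2-1}{24}c_M\bigr)^2\bigl(h_M + \tfrac{p^2-4}{96}c_M\bigr) - 9 h_V^2\Bigr),
\]
coinciding with $D_p$ of (\ref{kriterijirr}) up to sign. Combining this with the tensor decomposition (\ref{sp2}), the full Shapovalov determinant $\det\langle V(\c,\h)_n | V(\c,\h)_n\rangle$ factors, up to nonzero combinatorial constants, as a product of powers of $a_p d_p - b_p^2$ over $p \leq n$. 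Hence $V(\c,\h)$ is reducible if and only if $a_p d_p - b_p^2 = 0$ for some $p \in \N$, which is precisely (\ref{krit-red}).

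To locate the singular vector when $p$ is the minimal integer satisfying (\ref{krit-red}): since $a_q d_q - b_q^2 \neq 0$ for all $q < p$, every diagonal block of the Shapovalov matrix at level $p$ indexed by a partition of $p$ with parts strictly smaller than $p$ remains invertible. The unique degenerate block is the one associated with the partition $(p)$ of $p$, namely the $2 \times 2$ matrix in (\ref{kriterijirr}). Its right-kernel is a one-dimensional subspace of $\mathrm{span}\{V(-p)v_\h,\, M(-p)v_\h\} \subset V(\c,\h)_p^p$, and any nonzero element there is the required singular vector. The main technical obstacle is the algebraic verification of the clean factorisation of $ad - b^2$ into the form stated; all the structural steps (induction, block-triangularity, tensor decomposition, and minimality argument) are already prepared in the preceding text.
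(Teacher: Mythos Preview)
Your overall strategy for the reducibility criterion matches the paper's: reduce the Shapovalov determinant via the block anti-triangular decomposition and the tensor factorisation to products of the elementary $2\times 2$ blocks, then evaluate $ad-b^2$. That part is fine and is exactly what the paper does.

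The last paragraph, however, contains a genuine error. You claim that a nonzero element of the right-kernel of the $2\times 2$ matrix $D_p$, i.e.\ a vector $\alpha V(-p)v_\h+\beta M(-p)v_\h$, is the singular vector. For $p>1$ this is false. Such a vector is not annihilated by $L(n)$ with $0<n<p$: for instance $[L(n),V(-p)]=(2n+p)V(n-p)$ and $[L(n),M(-p)]=(n+p)M(n-p)$ produce nonzero results. Equivalently, it is not in the radical of the full form at level $p$: already for $p=2$ one checks that $\langle L(-1)^2 v_\h\,|\,\alpha V(-2)v_\h+\beta M(-2)v_\h\rangle=12\alpha h_V+6\beta h_M$, and the kernel condition for $D_2$ is $(4h_M+\tfrac{c_M}{2})\beta+6h_V\alpha=0$, which is a different linear relation.

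The correct argument uses one more layer of the block structure. The matrix $A_{0p}=\langle V(\c,\h)_p^0\,|\,V(\c,\h)_p^p\rangle$ is block lower-triangular in the partition-type ordering, with diagonal block $D_p$ at type $(p)$ and all other diagonal blocks invertible by minimality of $p$. Hence $\ker A_{0p}$ is one-dimensional, but a kernel vector is obtained by back-substitution: its type-$(p)$ component lies in $\ker D_p$, and the components in the remaining types of $V(\c,\h)_p^p$ are then determined (and generically nonzero). Since $\langle V(\c,\h)_p^k\,|\,V(\c,\h)_p^p\rangle=0$ for $k\ge 1$, any vector in $\ker A_{0p}\subset V(\c,\h)_p^p$ lies in the radical of the full form on $V(\c,\h)_p$; minimality of $p$ then forces it to be singular. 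This yields a singular vector in $V(\c,\h)_p^p$ as stated in the theorem, but not in the two-dimensional span $\{V(-p)v_\h,\,M(-p)v_\h\}$ that you claimed.
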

\begin{remark}
This method of f{}inding zeros of the determinant formula and thus classifying irreducible Verma modules relies on the fact that $\<M(z),V(z)\>$ is a commutative subalgebra of $\gw(c_L,c_M)$. Essentially the same method was used in classif{}ication of irreducibles over the GCA ($W(2,2)$) in \cite{DZ} and \cite{JZ} where it was shown that the Verma module $V^{W(2,2)}(h_L,h_M)$ is reducible if and only if 
\bea
\<L(-p)v\vert M(-p)v\>=p\left(2h_M+\frac{p^2-1}{12}c_M\right)=0
\eea
for some $p\in\N$. In that case there is a singular vector in $V^{W(2,2)}(h_L,h_M)_p^p$. Free f{}ield realisation of highest weight modules and formula for singular vectors was obtained in \cite{AR1}, \cite{AR2} and \cite{R}.\\
More importantly, this method can be applied to other Galilean W-algebras.
\end{remark}

\subsection{Simplicity of $\gw(c_L,c_M)$}\label{simple}
The submodule structure of reducible Verma module can be complicated. As an example we present formulas for (sub)singular vectors of weight $h_L+1$. In particular, we describe the vacuum module, and prove simplicity of $\gw(c_L,c_M)$.
\begin{example}\label{lvl1}
If $h_V^2=\frac{2h_M^2(32h_M-c_M)}{45c_M}$ (i.e.\ $p=1$) the singular vector from Theorem \ref{reduc} is given by
\bea
s.v_\h&=&
\left(V(-1)-\frac{3h_V}{2h_M}M(-1)\right)v_\h.
\eea
We have $W(0)s.v_\h=\left(h_W-\frac{3h_V}{h_M}\right)s.v_\h$, $M(0)s.v_\h=h_Ms.v_\h$, $V(0)s.v_\h=h_Vs.v_\h$.
Consider the determinant formula of the quotient module $V(\c,\h)/\<s.v_\h\>$. Factor corresponding to level one is
\bea
&\begin{vmatrix}
\<L(-1)v_\h\vert L(-1)v_\h\>&\<L(-1)v_\h\vert M(-1)v_\h\>&\<L(-1)v_\h\vert W(-1)v_\h\>\\
\<M(-1)v_\h\vert L(-1)v_\h\>&\<M(-1)v_\h\vert M(-1)v_\h\>&\<M(-1)v_\h\vert W(-1)v_\h\>\\
\<W(-1)v_\h\vert L(-1)v_\h\>&\<W(-1)v_\h\vert M(-1)v_\h\>&\<W(-1)v_\h\vert W(-1)v_\h\>
\end{vmatrix}=\\
&=\frac{8}{5}h_M^2\left(h_L-16\frac{h_M}{c_M}(3h_L+\frac{2}{5})+16\frac{h_M^2}{c_M^2}(c_L+\frac{44}{5})+3h_W\sqrt{\frac{5}{2c_M}(32h_M-c_M)}{}\right).
\eea
\begin{description}
\item[a] If $h_M=0$, then both $s_1^\pm.v_\h=\left(V(-1)\pm\frac{\iu}{\sqrt{10}}M(-1)\right)v_\h$ are singular vectors such that
\bean
(W(0)-h_W)s_1^\pm.v_\h = \pm\frac{2\iu}{\sqrt{10}}s_1^\pm.v_\h,
\eean
while $s_1.v_\h=M(-1)v_\h$ is a subsingular vector such that $s_1^\pm.v_\h\in\langle s_1.v_\h\rangle$.
\item[b] If
\begin{equation}\label{uvj}
h_L-16\frac{h_M}{c_M}(3h_L+\frac{2}{5})+16\frac{h_M^2}{c_M^2}(c_L+\frac{44}{5})+3h_W\sqrt{\frac{5}{2c_M}(32h_M-c_M)}=0
\end{equation}
then 
\bean
&&s_2.v_\h=\\
&&\left(W(-1)-\frac{3h_V}{2h_M}L(-1)-\frac{16}{5c_M}\left(\frac{h_M}{3h_Vc_M}(c_Mh_L-h_M(c_L-4))-\frac{3h_V}{h_M}\right)M(-1)\right)v_\h
\eean
is a subsingular vector in $V(\c,\h)$.
\item[ab] If $h_M=0$ and $h_L=3\iu\sqrt{5/2}h_W$ then
\bean
&&s_3.v_\h=\left(W(-1)+\frac{\iu}{\sqrt{10}}L(-1)\right)v_\h=\left(W(-1)-\frac{3 h_W}{2 h_L}L(-1)\right)v_\h
\eean
is a subsingular vector.
\item[abc] If $\h=0$, then $$s_4.v_0=L(-1)v_0$$
is a subsingular vector 
and $V(\c,0)_1\subset\<s_4.v_0\>$.
\end{description}
\end{example}
\begin{theorem}\label{prosta}
Vertex algebra $\gw(c_L,c_M)$ is simple for all $c_L,c_M\in\C$, $c_M\ne0$.
\end{theorem}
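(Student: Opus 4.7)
The plan is to show that the invariant bilinear form on $\gw(c_L,c_M)$ (viewed as a module over itself, with highest weight $\h=0$ and one-dimensional top $\C\mathbf{1}$) is non-degenerate on each weight space $\gw(c_L,c_M)_n$ with $n\ge 1$. Since any non-zero ideal is a submodule of $\gw(c_L,c_M)$ generated in positive weight, it must lie in the radical of this form; non-degeneracy therefore implies simplicity. Equivalently, this shows that the natural surjection $\gw(c_L,c_M)\twoheadrightarrow L(\c,0)$ is an isomorphism.

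The approach is to apply the commutative-degree decomposition of Subsection \ref{det-form} directly to the PBW basis (\ref{PBW-VOA}). The block-triangular structure of the Gram matrix induced by $\deg_c$, the further block-triangularity by partition type coming from (\ref{key}), and the tensor-product factorization over the parts $p$ all rest only on invariance of the form and commutativity of the subalgebra generated by $M$ and $V$, both of which hold verbatim in $\gw(c_L,c_M)$. The only difference from the Verma case is that certain low-$p$ creators are absent on $\mathbf{1}$: the $p=1$ factor is trivial (since $L(-1),M(-1),W(-1),V(-1)$ annihilate $\mathbf{1}$), and for $p=2$ only $L(-2),M(-2)$ survive (since $W(-2),V(-2)$ also annihilate $\mathbf{1}$). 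For each $p\ge 3$, by (\ref{relacija}) and the elementary transformation producing a triangular matrix, the $(k_p+1)\times(k_p+1)$ block has determinant proportional to $(ad-b^2)^{k_p(k_p+1)/2}$. Substituting $\h=0$ into (\ref{abd}-\ref{abd2}) yields
\[
a=\frac{p(p^2-1)(p^2-4)}{360}\,c_M,\qquad b=0,\qquad d=\frac{p(p^2-1)}{12}\,c_M,
\]
so $ad-b^2=ad\neq 0$ whenever $c_M\neq 0$ and $p\ge 3$. At $p=2$, the PBW restriction collapses the block to the single entry $\alpha^{(k_2)}_{k_2+1,k_2+1}=k_2!\,d^{k_2}=k_2!\,(c_M/2)^{k_2}$, which is again non-zero precisely because $c_M\neq 0$.

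The main obstacle I expect is verifying that the tensor product factorization of Subsection \ref{det-form} passes cleanly through the PBW restriction: one must confirm that the partial order $\prec$ on $\pp_n$ and the vanishing identity (\ref{key}) still force the off-diagonal blocks to zero when the forbidden low-index monomials are removed, and that the surviving diagonal blocks really decompose as the $p$-th factors described above without spurious mixing. This amounts to the observation that the exclusions are symmetric between the commutative side ($M(-1),V(-1),V(-2)$) and the non-commutative side ($L(-1),W(-1),W(-2)$), so each $p$-factor is restricted independently. Once this bookkeeping is settled, multiplying the non-zero determinants over all $p\ge 2$ gives a non-vanishing determinant for every diagonal block, hence for the whole Gram matrix of $\gw(c_L,c_M)_n$; combined with $\gw(c_L,c_M)_0=\C\mathbf{1}$, this yields $\gw(c_L,c_M)\cong L(\c,0)$ and proves simplicity.
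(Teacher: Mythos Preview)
Your proposal is correct and takes essentially the same approach as the paper: both identify the PBW basis of $\gw(c_L,c_M)$ with the restricted basis $B'$ (the paper phrases this as $\gw(c_L,c_M)\cong V(\c,0)/\langle L(-1)v_0\rangle$), observe that the symmetry between the commutative and non-commutative exclusions preserves the block structure of Subsection~\ref{det-form}, and then reduce to the $1\times 1$ pairing $\langle L(-2)\mathbf 1\mid M(-2)\mathbf 1\rangle=c_M/2$ at $p=2$ together with $ad-b^2=D_p\neq 0$ for $p\ge 3$ at $\h=0$. Your explicit handling of the $p=2$ collapse via $\alpha^{(k_2)}_{k_2+1,k_2+1}=k_2!\,(c_M/2)^{k_2}$ is the same content as the paper's single factor $c_M/2$.
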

\begin{proof}
First we prove that $V(\c,0)/\< L(-1)v_0\>\cong \gw(c_L,c_M)$. 
From commutator relations follows that $V(-2)v,W(-2)v\in\< L(-1)v_0\>$ so the set $B'$ of monomials from $B_n$, $n\in\Zp$ such that $v_1=m_1=w_1=l_1=v_2=w_2=0$ spans $V(\c,0)/\< L(-1)v_0\>$. However this is precisely the PBW basis of universal enveloping VOA $\gw(c_L,c_M)$.

Now we follow the same procedure as in Subsection \ref{det-form}. One of the key facts in block diagonalisation of the Gram matrix of $V(\c,\h)$ is that $\dim V(\c,\h)_n^k = \dim V(\c,\h)_n^{n-k}$. The basis $B'$ retains this kind of symmetry so it is easy to see that determinant formula of the quotient module reduces to product of $\< L(-2)v_0\vert M(-2)v_0\>=\tfrac{c_M}{2}$ and determinants $D_n$, $n\in\Z_{>2}$ (\ref{kriterijirr}). However, for $\h=0$ we have $D_n=n(n^2-1)^2(n^2-4)c_M^2/4320$. This proves irreducibility, i.e.\ $\gw(c_L,c_M)\cong V(\c,0)/\< L(-1)v_0\>=L(\c,0)$.
\end{proof}

\begin{remark}
Proving simplicity of general VOAs is much more dif{}f{}icult. For example see \cite{AJM} for treatment of af{}f{}ine VOA, and the $W$-algebras obtained by the generalised quantised Drinfeld-Sokolov reduction (which includes $W_3$). Determinant formula for vacuum $W_3$-module is considered in \cite{CTW}.
\end{remark}

\section{Free f{}ield realisation}\label{ff}
We shall f{}irst recall the free f{}ield realisation of Galilean Virasoro algebra and its highest weight modules. This realisation was obtained using a rank 2 Heisenberg algebra and associated lattice VOA. Then we use the same idea to construct Galilean $W_3$ as a subalgebra of a rank 4 lattice VOA.
\subsection{Realisation of Galilean conformal algebra}\label{ff1}
Free f{}ield realisation of GCA or $W(2,2)$ was obtained in \cite{AR1,AR2,AR3} by means of embedding it in twisted Heisenberg-Virasoro algebra at level 0. Here we recall this construction (with slightly adjusted parametrisation).

Let $L=\Z c+\Z d$ be a rank 2 lattice such that 
\[
\< c\vert d\>=2,\quad\< c\vert c\>=\< d\vert d\>=0.
\]
Let $\mathfrak h=\C\otimes_\Z L$ and $\hat{\mathfrak h}=\mathfrak h\otimes\C [t,t^{-1}]\oplus\C K$ its af{}f{}inization. For any $h\in\mathfrak h$ we write $h(n)$ for $h\otimes t^n$ and we let $h(z)=\sum_{n\in\Z}h(n)z^{-n-1}$.

We denote by $M(1,h)$ the induced $\hat{\mathfrak h}$-module 
\[
U(\hat{\mathfrak h})\otimes_{U(\C[t]\otimes\mathfrak h\oplus\C K)}\C e^h
\]
such that $t\C[t]\otimes\mathfrak h$ acts trivially on $e^h$, $k(0)e^h=\< k\vert h\> e^h$ for $k\in\mathfrak h $ and $K e^h=e^h$. Then $M(1):=M(1,0)$ is a rank 2 Heisenberg vertex algebra generated by the f{}ields $h(z),\ h\in\mathfrak h$ and $M(1,h),\ h\in\mathfrak h$ are irreducible $M(1)$-modules. Furthermore, the f{}ields
\bean
\o(z)&=&\frac{1}{2}c(z)d(z)+\frac{c_L-2}{24}\partial c(z)-\frac{1}{2}\partial d(z),\\
M(z)&=&-\frac{c_M}{24}\left(c(z)^2-2\partial c(z)\right)
\eean
generate a vertex operator subalgebra of $M(1)$ which is isomorphic to GCA $L^{W(2,2)}(c_L,c_M)$. Def{}ine
\bean
\vpr&=&e^{-\frac{p+1}{2}d+\left((p+1)\frac{c_L-2}{24}-\frac{2p-r-1}{2}\right)c},\\
h_L[p,r]&=&(1-p^2)\frac{c_L-2}{24}+p\frac{2p-r-1}{2},\\
h_M[p]&=&\frac{1-p^2}{24}c_M.
\eean
Then we have
\bean
&&L(0)\vpr=h_L[p,r]\vpr,\qquad M(0)\vpr=h_M[p]\vpr,\\
&&h_L[-p,-r-2]=h_L[p,r],\qquad\quad h_M[-p]=h_M[p],\\
&&h_L[p,r]+p=h_L[p,r-2].
\eean
Denote by $\mathcal F_{p,r}=M(1).\vpr$. We f{}ix central charge $(c_L,c_M)$ and denote by $V[p,r]$ (resp.\ $L[p,r]$) the Verma (resp.\ irreducible) module of highest weight $(h_L[p,r],h_M[p])$. Then we have
\begin{theorem}[\cite{R},\cite{JZ},\cite{AR1},\cite{AR2}]\label{GCA}
Let $p>0$.
\begin{enumerate}
\item The Verma module $V[p,r]$ is reducible if and only if $p\in\N$. In that case, there is a singular vector $u'_p\in V[p,r]$ such that $\< u'_p\>\cong V[p,r-2]$.
\item $u'_p$ generates the maximal submodule in $V[p,r]$ if and only if $r\in\N$.
\item If $r\in\N$ then the maximal submodule in $V[p,r]$ is generated by a subsingular vector $u_{rp}$ of conformal weight $h_{p,r}+pr=h_{p,-r}$.
\item $\mathcal F_{p,r}\cong V[p,r]$ and $L[p,r]\cong U(W(2,2)).\vpr<\mathcal F_{-p,-r-2}\cong \mathcal F_{p,r}^\ast$.
\end{enumerate}
\end{theorem}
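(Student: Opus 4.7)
I would prove the four assertions in order, using the determinant-formula technique of Subsection~\ref{det-form} (simpler here because $\langle M(z)\rangle$ is a commutative subalgebra of $W(2,2)$), combined with direct Heisenberg computations for part~(4).

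For part~(1), I start from the $W(2,2)$ reducibility criterion recalled in the remark after Theorem~\ref{reduc}: $V^{W(2,2)}(h_L,h_M)$ is reducible iff $2h_M+\tfrac{q^2-1}{12}c_M=0$ for some $q\in\N$. Substituting $h_M=h_M[p]=\tfrac{1-p^2}{24}c_M$ reduces this to $q^2=p^2$, so reducibility is equivalent to $p\in\N$, and it produces a singular vector $u'_p\in V[p,r]^p_p$. A short computation with the formula for $h_L[p,r]$ yields $h_L[p,r]+p=h_L[p,r-2]$, identifying the weight of $u'_p$ with that of the highest weight vector of $V[p,r-2]$; by universality this gives a surjection $V[p,r-2]\twoheadrightarrow\langle u'_p\rangle$, and equality of characters forces it to be an isomorphism.

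For parts~(2) and~(3), the previous step iterates: each reducible $V[p,r-2k]$ contributes its own level-$p$ singular vector, producing a descending chain of embedded Verma modules. The quotient $V[p,r]/\langle u'_p\rangle$ admits no further singular vector of the same $M(0)$-weight below level $pr$ by the same block-diagonal Gram argument; using the identity $h_L[p,-r]=h_L[p,r]+pr$ one sees that exactly at level $pr$ the weight coincides with that of the mirror Verma module $V[p,-r]$. When $r\in\N$ this level is a positive integer and accommodates a vector $u_{rp}$ whose image in $V[p,r]/\langle u'_p\rangle$ is singular, i.e.\ $u_{rp}$ is subsingular; its explicit construction is a Malikov--Feigin--Fuchs-type free field formula carried out in \cite{AR2,R}. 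When $r\notin\N$ no such mirror point is accessible, so $\langle u'_p\rangle$ is already maximal.

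For part~(4), I verify by direct computation from the formulas for $\omega(z)$ and $M(z)$ in terms of $c(z),d(z)$ that $L(n)\vpr=\delta_{n,0}h_L[p,r]\vpr$ and $M(n)\vpr=\delta_{n,0}h_M[p]\vpr$ for $n\geq 0$, so $\vpr$ is a highest weight vector of weight $(h_L[p,r],h_M[p])$. Universality of $V[p,r]$ together with freeness of the Heisenberg action on $\vpr$ and matching characters give $\mathcal F_{p,r}\cong V[p,r]$. The contragredient identification $\mathcal F_{-p,-r-2}\cong \mathcal F_{p,r}^*$ is the standard Heisenberg duality $h\mapsto -h$, and since $h_L[-p,-r-2]=h_L[p,r]$ and $h_M[-p]=h_M[p]$, the vector $v_{-p,-r-2}$ generates inside $\mathcal F_{-p,-r-2}$ precisely the irreducible quotient $L[p,r]$. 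The hard part is (2)--(3): explicitly writing down $u_{rp}$, showing it is not in $\langle u'_p\rangle$ when $r\in\N$, and verifying maximality of $\langle u'_p\rangle$ when $r\notin\N$. This combinatorial heart of the argument is the content of \cite{R,AR2}; translating between their parametrisation and the one $(h_L[p,r],h_M[p])$ used here is a delicate bookkeeping step that deserves care.
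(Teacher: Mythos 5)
The paper itself offers no proof of this theorem: it is recalled, with attributions, from \cite{R}, \cite{JZ}, \cite{AR1}, \cite{AR2}, so there is no internal argument to compare yours against. Judged on its own, your outline follows the broad strategy of those references, but two steps as written do not go through. For part (1), the identification $\< u'_p\>\cong V[p,r-2]$ does not follow from ``equality of characters'': universality gives only a surjection $V[p,r-2]\twoheadrightarrow\< u'_p\>$, and the character of the cyclic submodule $\< u'_p\>$ is not known in advance to be the Verma character --- that is exactly what has to be proved. In \cite{JZ} and \cite{R} it is done by exhibiting a nonzero leading term of $u'_p$ in a PBW ordering, which forces the negative modes to act freely on it.

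More seriously, your argument for part (4) --- freeness of the Heisenberg action plus matching characters --- proves too much. The Fock module $\mathcal F_{-p,-r-2}$ has the same character and, since $h_L[-p,-r-2]=h_L[p,r]$ and $h_M[-p]=h_M[p]$, the same highest weight as $\mathcal F_{p,r}$; your reasoning applied to it would yield $\mathcal F_{-p,-r-2}\cong V[p,r]$, contradicting the last clause of part (4), which says $U(W(2,2)).v_{-p,-r-2}$ is the \emph{proper} irreducible submodule $L[p,r]$ whenever $V[p,r]$ is reducible. Distinguishing a Fock module from its contragredient is the real content here and requires a computation (in \cite{AR1}, \cite{AR2} it is done via explicit singular vectors coming from screening/intertwining operators, establishing injectivity of $V[p,r]\to\mathcal F_{p,r}$ for $p>0$). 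Likewise, in parts (2)--(3) the absence of singular vectors in $V[p,r]/\< u'_p\>$ below level $pr$ cannot be read off the Gram matrix of the Verma module itself; one needs the determinant formula of the quotient (compare how the paper handles the vacuum module in Theorem \ref{prosta}) or the explicit subsingular vector of \cite{R}. Deferring these computations to the references is legitimate --- the paper does the same --- but they should not be presented as consequences of character counting.
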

In the following we aim to obtain analogous results for $\gw$.

\begin{remark}
GCA is realised in \cite{AR1}, \cite{AR2}, \cite{AR3} as a subalgebra of Heisenberg-Virasoro VOA at level zero. One may also consider the $N=1$ super GCA. Realisation for central charge $c_L=11$ was presented in \cite{BJMN}. The $N=1$ super Heisenberg-Virasoro VOA was introduced in a recent paper \cite{AJR}, and the full treatment of level zero should appear soon as well. This will provide a natural framework for studying realisation of super GCA with arbitrary central charge.
\end{remark}

\subsection{Realisation of Galilean $W_3$ algebra}\label{realizacija_alg}
Let $L=\Z a+\Z b+\Z c+\Z d$ be a rank 4 lattice such that 
\[
\< a\vert b\>=\< c\vert d\>=-1,\quad\< a\vert c\>=\< a\vert d\>=\< b\vert c\>=\< b\vert d\>=0,\quad \< x\vert x\>=2,\quad x\in\lbrace a,b,c,d\rbrace.
\]
F{}ix $\l,\m\in\C$ such that $\l+i\m\neq0$ and let
\bean
\bar a&=& a+\iu c,\nonumber\\
\bar b&=& b+\iu d,\nonumber\\
\lm&=&\l+\iu\m.
\eean
Now we def{}ine the f{}ields in $M(1)$ which generate the Galilean $W_3$ algebra $\gw(c_L,c_M)$. Let
\bea
\o(z)&=&\frac{1}{3}\bigg(a(z)^2+a(z)b(z)+b(z)^2+c(z)^2+c(z)d(z)+d(z)^2\bigg)+\\
&&+\left.\l\partial a(z)+\l\partial b(z)+\m\partial c(z)+\m\partial d(z)\right.\nonumber\\
W(z)&=&\frac{\iu}{27\lm\sqrt{10}}\Bigg( 2((a(z)-b(z))(\bar a(z)+2\bar b(z))(2\bar a(z)+\bar b(z))+ \\
&&+(\bar a(z)-\bar b(z))(a(z)+2b(z))(2\bar a(z)+\bar b(z))+ \nonumber\\
&&+(\bar a(z)-\bar b(z))(\bar a(z)+2\bar b(z))(2a(z)+b(z)))+\nonumber\\
&&+9\l\bigg(\partial\bar a(z)(2\bar a(z)+\bar b(z))-\partial\bar b(z)(\bar a(z)+2\bar b(z))\bigg)+ \nonumber\\
&&+9\lm\bigg(\partial a(z)(2\bar a(z)+\bar b(z))-\partial b(z)(\bar a(z)+2\bar b(z))+\nonumber\\
&&+\partial\bar a(z)(2a(z)+b(z))-\partial\bar b(z)(a(z)+2b(z))\bigg)+ \nonumber\\
&&+18\l\lm\bigg(\partial^2\bar a(z)-\partial^2\bar b(z)\bigg)+9\lm^2\bigg(\partial^2 a(z)-\partial^2 b(z)\bigg)\Bigg)+\nonumber\\
&&\left(\frac{4}{15\lm^2}-\frac{\l}{\lm}-1\right)V(z)\nonumber\\
M(z)&=&\frac{1}{3}\bigg(\bar a(z)^2+\bar a(z)\bar b(z)+\bar b(z)^2\bigg)+\lm\bigg(\partial\bar a(z)+\partial\bar b(z)\bigg) \\
V(z)&=&\frac{\iu}{27\lm\sqrt{10}}\Bigg( 2(\bar a(z)-\bar b(z))(\bar a(z)+2\bar b(z))(2\bar a(z)+\bar b(z))+\\
&&+9\lm\bigg(\partial\bar a(z)(2\bar a(z)+\bar b(z))-\partial\bar b(z)(\bar a(z)+2\bar b(z))\bigg)+\nonumber\\
&&+9\lm^2\bigg(\partial^2\bar a(z)-\partial^2\bar b(z)\bigg)\Bigg).\nonumber
\eea
Direct calculation (with help of an OPE package for \textsc{Mathematica}) shows that f{}ields def{}ined above satisfy relations of Galilean $W_3$ algebra $\gw(c_L,c_M)$ with central charge
\bea
c_L &=& 4 - 24 (\l^2+\m^2),\\
c_M&=& - 24 \lm^2,
\eea
which gives us realisation for all $c_L,c_M\in\C$, $c_M\ne0$.

Notice that $L$ is a tensor product of two sublattices whose Gram matrices equal the Cartan matrix of $\mathfrak{sl}_3$. This kind of rank 2 lattice has been used in free f{}ield realisation of $W_3$ algebra (cf.\ \cite{RSW}).

We also remark that the f{}ields $M'(z)$ and $V'(z)$ which are obtained by substituting $\l$, $a(z)$ and $b(z)$ for $\bar\l$, $\bar a(z)$ and $\bar b(z)$ in $M(z)$ and $V(z)$ generate a copy of $W_3(-8\l-22/5)$ (cf.\ \cite{RSW}).

\section{Realisation of highest weight representations}\label{ffr}
Let $\C[L]$ be a group algebra of $L$ and $V_L=M(1)\otimes\C[L]$ associated VOA. We introduce a parametrisation of highest weight vectors in $V_L$. Let $e[p,q,r,s]$ denote a highest weight vector $e^k\in V_L$ where 
\bean
k&=&\left(\left(1+\frac{p+q}{2}\right)\l+\frac{2-r-s}{2\lm}\right)a+\left((1+q)\l-\frac{s-1}{\lm}\right)b+\nonumber\\
&+&\left(\left(1+\frac{p+q}{2}\right)\m+\iu\frac{2-r-s}{2\lm}\right)c+\left((1+q)\m-\iu\frac{s-1}{\lm}\right)d.
\eean
Weights $\h[p,q,r,s]$ of $e[p,q,r,s]$ are given by 
\begin{eqnarray}
h_L[p,q,r,s]&=&\frac{p(1-r)+3q(1-s)}{2}+\frac{c_L-4}{96}(4-p^2-3q^2),\label{paramhL}\\
h_W[p,q,r,s]&=&\frac{\iu}{2\sqrt{10}}\left(2p q (1-r)+(1-s)(p^2-3q^2)+q(p^2-q^2)\frac{52-5c_L}{120}\right),\label{paramhW}\\
h_M[p,q,r,s]&=&(4-p^2-3q^2)\frac{c_M}{96},\label{paramhM}\\
h_V[p,q,r,s]&=&\frac{\iu c_M}{48\sqrt{10}}q(q^2-p^2).\label{paramhV}
\end{eqnarray}
Direct calculation shows that 
\begin{eqnarray}\label{tezine}
\h[p,q,r,s]&=&\h[-p,q,-r+2,s]\\
	&=&\h\left[\frac{-p+3q}{2},-\frac{p+q}{2},\frac{-r+3s}{2},-\frac{r+s-4}{2}\right]\nonumber\\
	&=&\h\left[\frac{p+3q}{2},\frac{p-q}{2},\frac{r+3s-2}{2},\frac{r-s+2}{2}\right]\nonumber\\
	&=&\h\left[-\frac{p+3q}{2},\frac{p-q}{2},-\frac{r+3s-6}{2},\frac{r-s+2}{2}\right]\nonumber\\
	&=&\h\left[\frac{p-3q}{2},-\frac{p+q}{2},\frac{r-3s+4}{2},-\frac{r+s-4}{2}\right],\nonumber
\end{eqnarray}
i.e.\ parametrisation (\ref{paramhL}-\ref{paramhV}) is $S_3$-invariant under the action
\begin{eqnarray}
\sigma(p,q,r,s)&=&\left(\frac{-p+3q}{2},-\frac{p+q}{2},\frac{-r+3s}{2},-\frac{r+s-4}{2}\right)\label{act1}\\
\tau(p,q,r,s)&=&(-p,q,-r+2,s),\label{act2}
\end{eqnarray}
where $\sigma$ and $\tau$ are generators of $S_3$ of orders 3 and 2, respectively. 

\begin{proposition}\label{param}
Let $\mathcal{P=}\lbrace(p,q,r,s)\in\C^4:0<p<3q\rbrace$ where "$<$" denotes the lexicographical ordering on $\{(\Re(z),\Im(z)):z\in\C\}$. Let $\h\in\C^4$ such that $$h_V^2\neq\frac{64\left(h_M-\frac{c_M}{24}\right)^3}{45c_M}.$$
\item[i)]There exists a unique $(p,q,r,s)\in\mathcal P$ such that $\h=\h[p,q,r,s]$.
\item[ii)]For every $r\in\C$ we have
\begin{eqnarray}\label{krit1}
h_L[0,q,r,s]&=&\frac{3q}{2}(1-s)+\frac{c_L-4}{24}\left(1-3\left(\frac{q}{2}\right)^2\right),\\
h_W[0,q,r,s]&=&-\iu\sqrt{\frac{2}{5}}\left(3\left(\frac{q}{2}\right)^2(1-s)+\left(\frac{q}{2}\right)^3\frac{52-5c_L}{60}\right),\\
h_M[0,q,r,s]&=&\frac{c_M}{24}\left(1-3\left(\frac{q}{2}\right)^2\right),\\
h_V[0,q,r,s]&=&\iu\sqrt{\frac{2}{5}}\left(\frac{q}{2}\right)^3\frac{c_M}{12}.\label{krit4}
\end{eqnarray}
\item[iii)]We have $\h[p,q,r,s]^*=\h[p,-q,r,2-s]$, and $\sigma^2(p,-q,r,2-s)\in\mathcal P$.
\item[iv)]The Verma module $V[p,q,r,s]$ is reducible if and only if $p\in\N$. In that case there is a singular vector of conformal weight $h_L+p$ in $V[p,q,r,s]$.
\end{proposition}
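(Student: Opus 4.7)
The overall approach is direct computation from the parametrization (\ref{paramhL})--(\ref{paramhV}), combined with the $S_3$-symmetry (\ref{tezine}) to control uniqueness, with part (iv) feeding on Theorem \ref{reduc}.

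For (i), note that $h_M$ and $h_V$ involve only $(p,q)$. From (\ref{paramhM}) we read off $\alpha := p^2 + 3q^2 = 4 - 96 h_M/c_M$; substituting into (\ref{paramhV}) yields a cubic $4q^3 - \alpha q - \beta = 0$ in $q$ alone, with $\beta$ depending only on $h_V$ and $c_M$. A direct computation shows its discriminant is proportional to $64(h_M - c_M/24)^3/(45 c_M) - h_V^2$, nonzero by hypothesis. Hence the cubic has three distinct roots --- the three $q$-coordinates of the $S_3$-orbit in (\ref{tezine}) --- each giving $p = \pm\sqrt{\alpha - 3q^2}$, for six candidates total. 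Running through the six cases against the lex-order conditions shows exactly one lies in $\mathcal P$. Once $(p,q)$ is fixed, (\ref{paramhL}) and (\ref{paramhW}) form a linear system in $(r,s)$ whose coefficient determinant is a nonzero multiple of $pq$ inside $\mathcal P$, so $(r,s)$ is uniquely recovered.

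Part (ii) is immediate substitution of $p = 0$ into (\ref{paramhL})--(\ref{paramhV}). For (iii), I would substitute $(p,-q,r,2-s)$ into the formulas termwise: every occurrence of $q$ in $h_L$ and $h_M$ is either in an even power or paired with $(1-s)$ (which flips under $s \mapsto 2-s$), so $h_L$ and $h_M$ are preserved; meanwhile each term of $h_W$ and each term of $h_V$ picks up exactly one sign, matching $\h^* = (h_L,-h_W,h_M,-h_V)$ as read off before the proposition. Applying the $\sigma^2$-action extracted from (\ref{tezine}) to $(p,-q,r,2-s)$ gives $\bigl((3q-p)/2,(p+q)/2,(3s-r)/2,(r+s)/2\bigr)$, and the $\mathcal P$-conditions for this tuple rearrange to $p < 3q$ and $0 < p$, which hold by assumption.

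For (iv), the reducibility criterion (\ref{krit-red}) becomes, after substituting (\ref{paramhM}) and (\ref{paramhV}) and clearing denominators, the equation
\[
(4n^2 - \alpha)^2 (n^2 - \alpha) + 27\, q^2 (q^2 - p^2)^2 = 0,
\]
viewed as a cubic in $n^2$ with $\alpha = p^2 + 3q^2$. A short check shows $n^2 = p^2$ is a root; dividing out $(n^2 - p^2)$ leaves a quadratic whose roots are $n^2 = (p \pm 3q)^2/4$. These three values are exactly the squares of the first coordinates of the three $\tau$-positive $S_3$-orbit representatives. Hence reducibility is equivalent to at least one of $\{p, (p+3q)/2, (3q-p)/2\}$ lying in $\N$; the uniqueness analysis of (i) is then used to collapse this to $p \in \N$, and the singular vector produced by Theorem \ref{reduc} sits at level $p$, i.e.\ at conformal weight $h_L + p$.

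The main technical step is the cubic factorization in (iv), which is essentially mechanical. The subtler point is matching the three integer-level candidates $p,(p+3q)/2,(3q-p)/2$ with the lex-order fundamental domain $\mathcal P$: one must verify that the orbit representative in $\mathcal P$ is precisely the one for which the integer, when it exists, is $p$ itself, so that the reducibility criterion collapses to $p \in \N$. This relies on the same case-analysis of signs in the lex order used to establish uniqueness in (i), and is the one place where the complex-parameter setup requires careful bookkeeping.
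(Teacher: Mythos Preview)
Your treatment of (i)--(iii) is correct and more explicit than the paper's, which for (i) only invokes the Jacobian to locate the critical values $p=0$, $p=\pm 3q$ and then observes that $\mathcal P$ is a fundamental domain for the $S_3$-action, and for (ii)--(iii) simply says ``direct calculation''. Your explicit cubic in $q$ with discriminant matching the excluded locus, and the termwise sign check for (iii), are both sound.

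The gap is in (iv). Your factorisation is correct: criterion (\ref{krit-red}) at level $n$ becomes $n^2\in\{p^2,\,((p+3q)/2)^2,\,((3q-p)/2)^2\}$, so reducibility is equivalent to one of $p,\,(p+3q)/2,\,(3q-p)/2$ lying in $\N$. The ``if'' direction of (iv) follows immediately (take $n=p$). But the collapse of the ``only if'' direction to $p\in\N$ for the $\mathcal P$-representative, which you flag as ``careful bookkeeping'', in fact fails. Take $(p,q)=(\tfrac12,\tfrac12)$: this satisfies $0<p<3q$, and the hypothesis of the proposition holds (one computes $h_V=0$ while $h_M-c_M/24=-c_M/96$, so the right-hand side is nonzero). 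Yet $(p+3q)/2=1\in\N$, so $V[\tfrac12,\tfrac12,r,s]$ is reducible with a singular vector at level $1$, although $p=\tfrac12\notin\N$. No lex-order case analysis can repair this: the three positive first coordinates $p,\,(3q-p)/2,\,(p+3q)/2$ are not ordered in any way that singles out $p$ as the only possible integer. The paper's own one-line proof of (iv) does not address this point either; what your computation genuinely establishes is that reducibility is equivalent to \emph{some} $S_3$-translate of $(p,q,r,s)$ having first coordinate in $\N$, which is strictly weaker than the ``only if'' as literally stated for the $\mathcal P$-representative.
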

\begin{proof}
i) From the Jacobian matrix of parametrisation (\ref{paramhL}-\ref{paramhV}) follows that $p=0$, and $p=\pm3q$ are critical values. Consider the action of $S_3=\<\sigma,\tau\>$ on $\C_4$ def{}ined by (\ref{act1}-\ref{act2}). Then $\mathcal P$ is a space of coinvariants $(\C^4)_{S_3}$ excluding critical values.\\
ii) and iii) direct calculation.\\
iv) is the reducibility condition (\ref{krit-red}) stated in terms of parametrisation.
\end{proof}

\begin{remark}
Note that the weights $\h$ not equal to (\ref{krit1}-\ref{krit4}) such that $h_V^2=\frac{64\left(h_M-\frac{c_M}{24}\right)^3}{45c_M}$ are not obtained by this parametrisation. This is analogous to realisation of GCA presented in Subsection \ref{ff1} where each $(0,r)$ produces weight $\left(\frac{c_L-2}{24},\frac{c_M}{24}\right)$. Highest weight modules of highest weights $\left(h_L,\frac{c_M}{24}\right)$ for $h_L\neq\frac{c_L-2}{24}$ were realised by means of deformed action on certain Whittaker modules (cf.\ \cite{AR3}) and by using the fact that these modules coincide with the highest weight modules over the Heisenberg-Virasoro algebra. We do not study realisation of remaining highest weights in this paper. However, it would be interesting to obtain these modules by some other means.
\end{remark}

\begin{example}\label{wt1}
Recall Example \ref{lvl1} of subsingular vectors at level 1. Then $s.e[1,q,r,s]$ is a singular vector in $\mathcal F_{1,q,r,s}$ while $s.e[-1,q,-r+2,s]=0$. Furthermore
\begin{description}
\item[a]$h_M[1,q,r,s]=0$ if and only if $q\in\lbrace\pm1\rbrace$.\\
$s_1.e[1,1,r,s]$ subsingular in $\mathcal F_{1,1,r,s}$, while $s_1.e[-1,-1,r,s]=0$.
\item[b](\ref{uvj}) holds if $r=1$.\\
$s_2.e[1,q,1,s]$ is subsingular in $\mathcal F_{1,q,1,s}$ and $s_2.e[-1,q,1,s]=0$.
\item[ab]$s_3.e[1,1,1,s]$ is subsingular in $\mathcal F_{1,1,1,s}$, and $s_3.e[-1,-1,1,s]=0$.
\item[abc]$s_4.e[1,1,1,1]$ is subsingular, and $s_4.e[-1,-1,1,1]=0$.
\end{description}
\end{example}
\begin{remark}
As we have seen (Proposition \ref{param} iv), integral values of $p$ detect positions of singular vectors in reducible Verma modules. Based on Example \ref{wt1} and on representation theory of GCA (Theorem \ref{GCA}) we expect that integral values of each of the remaining three parameters detect positions of subsingular vectors. Dif{}ferent sectors (of $S_3$ action on $\C^4$) should produce variant subquotients of $V[p,q,r,s]$, including the Verma module itself, and the irreducible quotient $L[p,q,r,s]$.
\end{remark}

\appendix
\section{$\l$-bracket calculation}
\subsection{Jacobi identity}\label{app}
Recall the Jacobi identity for $\l$-brackets (\ref{jac}). The most dif{}f{}icult calculation occurs in case $a=b=c=W$. We have (cf.\ \cite{DSK} Lemma 3.2)
\bean
\ [W_\l LM]&=&2 (DW)M+2L(DV)+3\l(WM+LV)+\left(4\l^2 D+\frac{5}{2}\l^3\right)V\\
\ [LM_\l W]&=&(D+3\l)(LV+WM)+2((DL)V+W(DM))+\frac{1}{2}\left(-3D^3-D^2\l+7D\l^2+5\l^3\right)V\\
\ [W_\l M^2]&=&4M(DV)+6\l MV\\
\ [M^2{}_\l W]&=&2(D+3\l)(MV)+4(DM)V
\eean
so $[W_\l [W_\mu W]]$ equals:
\bean
	&&\left(\frac{\mu^3}{3}+\frac{\mu^2}{2}(\l+D)+\frac{3\mu}{10}(\l+D)^2+\frac{1}{15}(\l+D)^3\right)(2D+3\l)W+\\
	&&+\frac{32}{5c_M}(2\mu+\l+D)\left((2D+3\l)(WM+LV)-2W(DM)-2(DL)V+\phantom{\frac{3}{10}}\right.\\
	&&\qquad+\left.(4\l^2D+\frac{5}{2}\l^3)V-\frac{3}{10}(\l+D)^2(2D+3\l)V\right)+\\
	&&-\frac{16}{5c_M^2}\left(c_L+\frac{44}{5}\right)(2\mu+\l+D)(4M(DV)+6\l MV);
\eean
and $[[W_\l W]_{\l+\mu}W]$ equals:
\bean
	&&\left(\frac{\l^3}{3}-\frac{\l^2}{2}(\l+\mu)+\frac{3\l}{10}(\l+\mu)^2-\frac{1}{15}(\l+\mu)^3\right)(D+3\l+3\mu)W+\\
	&&+\frac{32}{5c_M}(\l-\mu)\left((D+3\l+3\mu)(WM+LV)+2(DL)V+2W(DM)+\phantom{\frac{3}{10}}\right.\\
	&&+\left.\frac{1}{2}(3D^3-D^2(\l+\mu)7+D(\l+\mu)^2+5(\l+\mu)^3)V-\frac{3}{10}(\l+\mu)^2(D+3\l+3\mu)V\right)+\\
	&&-\frac{16}{5c_M^2}\left(c_L+\frac{44}{5}\right)(\l-\mu)(6(\l+\mu+D)MV-4M(DV)).
\eean
Comparing all the coef{}f{}icients one sees that (\ref{jac}) holds.
Similarly, in case $a=b=W$, $c=V$ we have
\bean
\ [LM_\l V]&=&3(D+\l)(MV)-2M(DV)
\eean
so
\bean
	\ [W_\l[W_\mu V]]&=&\left(\frac{\mu^3}{3}+\frac{\mu^2}{2}(\l+D)+\frac{3\mu}{10}(\l+D)^2+\frac{1}{15}(\l+D)^3\right)(2D+3\l)V+\\
	&+&\frac{16}{5c_M}(2\mu+\l+D)(6\l MV+4MDV)\\
	\ [[W_\l W]_{\l+\mu} V]&=&\left(\frac{\l^3}{3}-\frac{\l^2}{2}(\l+\mu)+\frac{3\l}{10}(\l+\mu)^2-\frac{1}{15}(\l+\mu)^3\right)(D+3\l+3\mu)V+\\
	&+&\frac{32}{5c_M}(\l-\mu)(3(D+\l+\mu)(MV)-2M(DV)).
\eean
Other cases are easier to check.

\subsection{Other def{}initions}\label{app2}
Suppose we want to construct a Galilean $W_3$ algebra in such a way that $W_3$ is its subalgebra, i.e.
\bean
\left[W_\l W\right]&=&\frac{c}{360}\l^5+\left(\frac{\l^3}{3}+\frac{\l^2}{2}D+\frac{3\l}{10}D^2+\frac{1}{15}D^3\right)L+\frac{16}{5c+22}(D+2\l)\left(L^2-\frac{3}{10}D^2L\right).
\eean
Let us check the Jacobi identity for a triple $W$, $W$, $M$. We see that $[W_\l[W_\mu M]]$ and $[W_\mu[W_\l M]]$ produce nonlinear terms in $M^2$, while $[[W_\l W]_{\l+\mu}M]$ produces $LM$, $(DL)M$ as well, so the identity (\ref{jac}) can not hold.

Therefore, either $[W_\l V]$ must contain a nonlinear term with factor $L$ (which means $W,L$ don't act on a commutative subalgebra generated by $M$ and $V$), or the nonlinear terms in $[W_\l W]$ must contain $M$ as a factor.

\section{The $c_M=0$ case}\label{cm=0}
Def{}inition \ref{gw3} of $\gw(c_L,c_M)$ does not allow for central charge $c_M=0$. If we rescale $W'(z)=c_M W(z)$ and then let $c_M=0$ we obtain the following non-trivial $\l$-brackets:
\begin{eqnarray}
\left[L_\l L\right]&=&(D+2\l)L+\frac{c_L}{12}\l^3,\\
\left[L_\l M\right]&=&(D+2\l)M,\\
\left[L_\l W'\right]&=&(D+3\l)W',\\
\left[L_\l V\right]&=&(D+3\l)V,\\
\left[W'_\l W'\right]&=&- \frac{16}{5}\left(c_L+\frac{44}{5}\right)(D+2\l)M^2,\\
\left[W'_\l V\right]&=&\frac{16}{5}(D+2\l)M^2.
\end{eqnarray}
The resulting VOA is an extension of Virasoro VOA by the ideal generated by primary f{}ields $M(z),W'(z),V(z)$.

We may follow the arguments of Subsection \ref{det-form} in obtaining the determinant formula. However in place of (\ref{abd1}) we have $W'(p)M(-p)v_\h=0$. Therefore 
\bea
\a_{i,j}^{(n)}=L(p)^{i-1}W'(p)^{n+1-i}V(-p)^{n+1-j}M(-p)^{j-1}v_\h=0
\eea
if $i<j$ which makes the matrix triangular with diagonal elements equal to
\bea
(n+1-i)!(i-1)!a^{n+1-i}d^{i-1}.
\eea
Since $a=\tfrac{32}{5}p h_M$ and $d=2p h_M$ we conclude that the Verma module $V(c_L,c_M=0,\h)$ is reducible if and only if $h_M=0$. In this case, $\langle M(-1)v_\h\rangle$ is a submodule, and reducibility of the associated quotient module corresponds to vanishing of $h_L$, $h_{W'}$ or $h_V$.


\end{document}